\title{Continuous cocycle superrigidity for the full shift over a finitely generated torsion group.}
\author{David Bruce Cohen}
\def\nbhd{{\mathcal{N}}}
\def\blone{{B(L,1_G)}}
\def\genset{{\mathcal{S}}}
\def\zerow{{\vec{0}}}
\def\Deo{{\Delta(\zerow)}}
\def\BtL{{B(3L,1_G)}}
\def\ngazzp{{\mathcal{N}_L(\gamma(\mathbb{Z}_{\geq 0}))}}
\def\ngazzn{{\mathcal{N}_L(\gamma(\mathbb{Z}_{\leq 0}))}}
\def\ZZp{{\mathbb{Z}_{\geq 0}}}
\def\ZZn{{\mathbb{Z}_{\leq 0}}}
\def\tilga{{\widetilde{\gamma}}}
\def\tilg{{\widetilde{g}}}
\def\olx{{\overline{x}}}
\def\tilx{{\widetilde{x}}}
\DeclareMathOperator{\ZZ}{\ensuremath{\mathbb{Z}}}
\DeclareMathOperator{\NN}{\ensuremath{\mathbb{N}}}
\DeclareMathOperator{\Cay}{Cay}
\def\To{{\rightarrow}}
\theoremstyle{plain}
\newtheorem{theorem}{Theorem}[section]
\newtheorem{lemma}[theorem]{Lemma}
\newtheorem{definition}[theorem]{Definition}
\newtheorem{proposition}[theorem]{Proposition}
\begin{document}

\maketitle

\begin{abstract}
Chung and Jiang showed that, if a one ended group contains an infinite order element, then every continuous cocycle over the full shift, taking values in a discrete group, must be cohomologous to a homomorphism. We show that their conclusion holds for all one ended groups, so that the hypothesis of admitting an infinite order element may be omitted. 
\end{abstract}

\section{Introduction.}
\label{section:intro}

Let $A$ be a finite set equipped with the discrete topology, let $G$ be a finitely generated group, and let $H$ be a discrete group. The full shift $A^G$ consists of all functions $G\To A$ equipped with the product topology and the left $G$-action given by $(gx)(h)=x(g^{-1}h)$. A result of Chung and Jiang \cite[Corollary 1]{cj} shows that when $G$ is one ended (Definition \ref{definition:ends}) and contains an infinite order element, $A^G$ has continuous cocycle rigidity---meaning that every continuous $H$-valued cocycle over $A^G$ is cohomologous to some homomorphism from $G$ to $H$, in the sense of the following definition.

\begin{definition}
(See \cite[Definitions 2.1,2.2]{cj} and Figure \ref{figure:cocycleidentity}.) Given a topological dynamical system consisting of a group $G$ acting by homeomorphisms on a space $X$, a continuous $H$-valued cocycle over this system is a continuous function $c:G\times X\to H$ such that $c(gh,x)=c(g,hx)c(h,x)$. If there exists a continuous function $b:X\To H$ and a homomorphism $\phi:G\To H$ such that $c(g,x)=b(gx)\phi(g)b(x)^{-1}$, then $c$ is said to be cohomologous to $\phi$ with transfer function $b$.
\end{definition}

\begin{figure}[t]
\labellist
\small\hair 2pt

\pinlabel $x$ at 50 65
\pinlabel $c(gh,x)$ at 175 65
\pinlabel $ghx$ at 287 65
\pinlabel $hx$ at 178 300
\pinlabel $c(h,x)$ at 78 173
\pinlabel $c(g,hx)$ at 226 173

\pinlabel $gx$ at 345 284
\pinlabel $c(g,x)$ at 465 288
\pinlabel $x$ at 585 284
\pinlabel $b(gx)$ at 368 165
\pinlabel $\phi(g)$ at 465 72
\pinlabel $b(x)$ at 550 165

\endlabellist

\centering
\centerline{\psfig{file=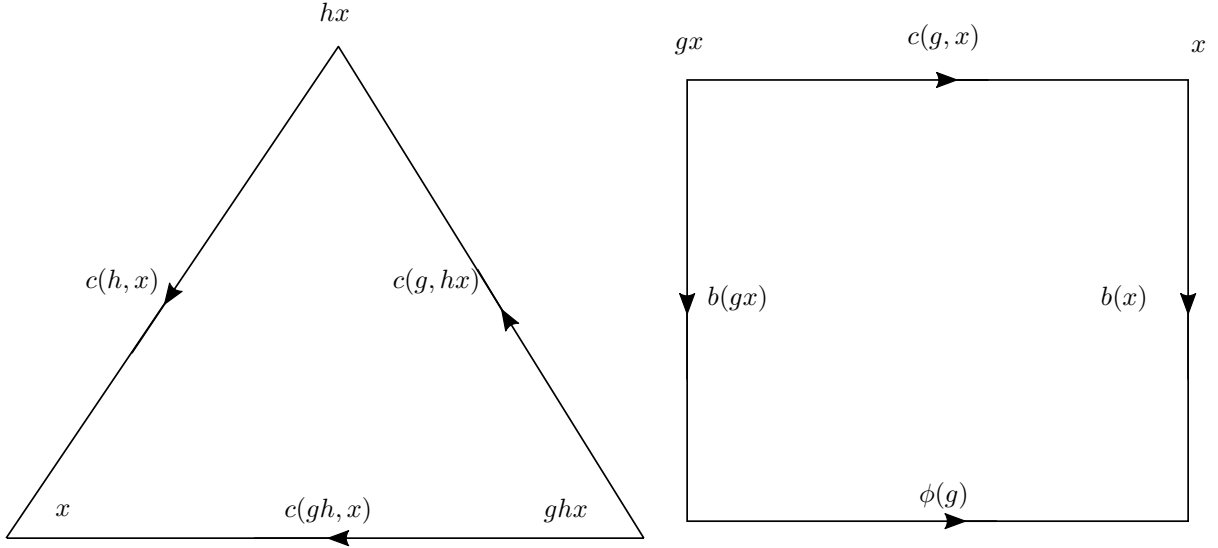,scale=80}}
\caption{At left, a van Kampen diagram depicting the cocycle identity. At right, a van Kampen diagram depicting a cohomology from $c$ to $\phi$ with transfer function $b$. Vertices are labeled so that if an edge labeled $h\in H$ connects a vertex labeled $y\in X$ to a vertex labeled $x\in X$, then there is some $g\in G$ such that $y=gx$ and $h=c(g,x)$.}
\label{figure:cocycleidentity}
\end{figure}

Continuous cocycles arise naturally in the study of continuous orbit equivalence \cite{li1} and topological couplings \cite{li2}. Our main theorem is that $A^G$ has continuous cocycle rigidity for every one ended group $G$. In particular, our theorem holds even if $G$ is periodic (has no infinite order element).

\begin{theorem}
\label{theorem:mainintro}
If $G$ is a one ended group, and $H$ a discrete group, then every continuous cocycle $c:G\times A^G\To H$ is cohomologous to a homomorphism.
\end{theorem}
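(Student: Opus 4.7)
The plan is to construct a continuous transfer function $b: A^G \To H$ with $b(\zerow) = 1_H$, where $\zerow$ is the constant configuration, so that $c(g,x) = b(gx)\phi(g)b(x)^{-1}$ holds for the homomorphism $\phi(g) := c(g, \zerow)$. Since $\zerow$ is $G$-fixed, the cocycle identity at $\zerow$ automatically forces $\phi$ to be a homomorphism. By continuity of $c$ and finite generation of $G$ by a symmetric set $\genset$, fix $L$ so that $c(s,x)$ depends only on $x|_{\blone}$ for each $s \in \genset$, which in turn controls the local dependence of $c(g,\cdot)$ for all $g$ via the cocycle identity.

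I would first define $b$ on the dense set of configurations $x$ that agree with $\zerow$ outside a finite defect set $F \subset G$. The idea is to transport $F$ off to infinity through a sequence of local modifications, recording the cocycle contribution of each move. Concretely, one chooses a bi-infinite embedded path $\gamma: \ZZ \To G$ in the Cayley graph whose two rays $\gamma(\ZZp)$ and $\gamma(\ZZn)$ escape into the unique end of $G$, with $L$-neighborhoods $\ngazzp$ and $\ngazzn$ disjoint from $F$. Using $\gamma$ as a track along which to slide each site of $F$ out to infinity, $b(x)$ is defined as the accumulated product of cocycle values along this process.

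The main obstacle, and the point where the argument must depart from Chung--Jiang, is the absence of an infinite order element $t$ whose orbit could play the role of $\gamma$. In their setting $\gamma(n) = t^n$ is intrinsic to $G$, so sliding is realized directly by the $G$-action and well-definedness is essentially automatic; in our torsion setting $\gamma$ is chosen externally, so showing that $b(x)$ does not depend on the choice of $\gamma$ is the central technical step. I expect to prove invariance by connecting any two admissible paths $\gamma, \tilga$ through a finite sequence of elementary edge replacements carried out inside the (unique, by one-endedness) infinite component of $G \setminus \mathcal{N}_L(F)$, and checking that each replacement preserves the accumulated cocycle value. The delicate case will be the interaction between the two halves of $\gamma$ near $1_G$, which must be analyzed inside $\BtL$, where contributions from opposite directions genuinely mix.

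Once $b$ is well-defined on eventually-$\zerow$ configurations, continuity on this dense set is automatic since $b(x)$ then depends on $x$ only through an $L$-neighborhood of $F$; a uniform version of this local dependence, together with density, lets $b$ extend uniquely to all of $A^G$. The cohomology identity $c(g,x) = b(gx)\phi(g)b(x)^{-1}$ is verified by construction on eventually-$\zerow$ configurations and propagates to $A^G$ by continuity, completing the proof.
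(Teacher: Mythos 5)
Your overall strategy matches the paper's: define $\phi(g)=c(g,\zerow)$, build a transfer function $b$ first on the finitely supported configurations $\Deo$, show $b$ has a uniform local dependence, and extend by continuity. But there is a genuine gap at the step you call ``automatic.'' Knowing that $b(x)$ depends only on $x|_{\nbhd_L F}$, where $F$ is the support of $x$, gives you nothing toward a continuous extension, because $\nbhd_L F$ grows with $x$. For $b$ to extend continuously over $A^G$ you need a \emph{fixed} finite window $K\subseteq G$ (in the paper, $K=\BtL$) such that $b(x)=b(y)$ whenever $x|_K=y|_K$, and this is precisely the hard part (Proposition \ref{proposition:agreement}). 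The argument is: given $x,y$ agreeing on $\BtL$, splice together a configuration $z$ equal to $x$ on $\nbhd_L(\gamma(\ZZp))$ and equal to $y$ on $\nbhd_L(\gamma(\ZZn))$ --- which is consistent only because Lemma \ref{lemma:intersection} forces the overlap of those two neighborhoods into $\BtL$ --- and then show $b(x)=b(z)=b(y)$ by computing $b$ along each ray separately, using Lemma \ref{lemma:neighborhood} and the independence of the ``base point'' (Lemma \ref{lemma:independence}). You correctly sense that the delicate point is the interaction of the two halves of $\gamma$ near $1_G$, but your plan never uses the two rays of $\gamma$ in this two-sided way, and without that the uniform bound on the dependence set does not come out.

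Two more specific concerns. First, the construction of $b$ by ``sliding each site of $F$ out to infinity'' does not correspond to any cocycle operation: $c$ takes a pair $(g,x)$ and the $G$-action translates the whole configuration, not individual sites. The workable construction is the paper's: pick a single $g_x$ with $|g_x|$ larger than $N(\|x\|+L)$ and set $b(x):=c(g_x,x)^{-1}\phi(g_x)$; the ``accumulated product along a path'' picture is then recovered by expanding $c(g_x,x)$ via the cocycle identity along a word for $g_x$ (this is Lemma \ref{lemma:neighborhood}). Second, proving well-definedness by connecting two choices of $\gamma$ ``through a finite sequence of elementary edge replacements inside the infinite component'' is both stronger than what one-endedness provides and unnecessary. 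One-endedness guarantees that the complement of a ball has one unbounded component (so you can connect two far-out points by a path avoiding the ball), not that it is in any sense simply connected, so an edge-replacement homotopy need not exist. The paper sidesteps homotopy entirely: Lemma \ref{lemma:neighborhood} shows the quantity $c(\gamma(n)^{-1},x)c(\gamma(0)^{-1},x)^{-1}$ depends only on $x$ restricted to the $L$-neighborhood of the path's image, so connecting two base points by any path in the unbounded component (on which $x$ is identically $0$) is enough for independence.
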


See Theorem \ref{theorem:main} for the proof. Our technique loosely follows that of \cite{cj}, but we use geodesics in the Cayley graph where \cite{cj} uses paths of the form $n\mapsto a^n$ (for $a\in G$ having infinite order).

\subsection{Outline}
\label{subsection:outline}
In \S \ref{section:geometry} we recall some basic facts about the geometry of finitely generated groups. In particular, we define geodesics and one-endedness, and prove the basic fact that every group contains a biinfinite geodesic.

In \S \ref{section:cocycles} we prove Lemma \ref{lemma:neighborhood}, which states that, for a continuous cocycle $c$, and an appropriate $L>0$, the value of $c(g,x)$ is determined by $c(h,x)$ together with the values of $x$ in the $L$-neighborhood of any path from $h^{-1}$ to $g^{-1}$.

\paragraph{Proof strategy.} Inspired by \cite{cj}, in \S \ref{section:maintheorem} we use the following strategy to prove our main theorem. Suppose that $G$ is a one ended group, $H$ a discrete group, and $c:G\times A^G\To H$ a continuous cocycle.
Take $0$ to denote some element of $A$.
\begin{itemize}
\item Define $\phi(g)=c(g,\zerow)$ where $\zerow\in A^G$ is the configuration which is $0$ everywhere.
\item For $x\in \Deo$, take $b(x)$ to be $\phi(g_x)c(g_x,x)^{-1}$, where $\Deo\subseteq A^G$ consists of configurations with only finitely many non-$0$ entries, and $g_x\in G$ is chosen so that its word norm is much larger than that of any $g$ such that $x(g)\neq 0$ (see Definition \ref{definition:phi}).
\item Show that the restriction of $c$ to $G\times\Deo$ is cohomologous to $\phi$ with transfer function $b$ (Proposition \ref{proposition:coboundary}).
\item Show that $b(x)$ only depends on a bounded set of coordinates of $x\in\Deo$, and hence $b$ may be extended continuously to all of $A^G$ (Proposition \ref{proposition:agreement}).
\item Show that $c$ is cohomologous to $\phi$ with transfer function $b$ (Theorem \ref{theorem:main}).
\end{itemize}

\paragraph{Acknowledgments.} The author is grateful for the support of NSF grant 1502608. He also wishes to thank Yongle Jiang for discussing his work \cite{cj}, and Andy Putman for some helpful suggestions.

\section{Geometry of groups}
\label{section:geometry}
We now briefly survey some relevant basic facts about the geometry of finitely generated groups. In particular, we recall the definitions of number of ends of a group and geodesics in a group, show that biinfinite geodesics exist in any infinite group (Lemma \ref{lemma:geodesicline}), and show that for $L>0$ and $\gamma$ a geodesic, the $L$-neighborhood of $\gamma(\{0,1,2\ldots\})$ intersects the $L$-neighborhood of $\gamma(\{0,-1,-2,\ldots\})$ in a set contained inside the ball of radius $3L$ around $\gamma(0)$ (Lemma \ref{lemma:intersection}).

\paragraph{Notation, assumptions, and conventions.} For the rest of this paper, $\genset$ will denote a finite, symmetric (i.e., $\genset=\genset^{-1})$ generating set for the infinite group $G$. Denote the identity element of $G$ by $1_G$. Every element of $G$ is represented by some word in $\genset$, and the word norm $|g|$ of any $g\in G$ is defined to be the length of the shortest word in $\genset$ representing $g$. That is,
$$|g|:=\inf\{\ell:g=s_1\ldots s_\ell;s_1,\ldots,s_\ell\in\genset\}.$$

For $g,h\in G$, define the left-invariant distance
$$d(g,h):=|g^{-1}h|,$$
and for any $T\subseteq G$, let $\nbhd_L T:=\{g\in G:\inf\{d(g,s):s\in T\}\leq L\}$ denote the $L$-neighborhood of $T$. Let $B(r,g)$ denote the closed $r$-ball around $g\in G$, i.e., $B(r,g):=\nbhd_r\{g\}$. Many of our figures are van Kampen diagrams---i.e., they depict planar graphs, with directed edges labeled by group elements, such that every loop represents the identity. 

\begin{definition}
\label{definition:geodesic}
Let $I$ be an interval in $\ZZ$---i.e., the intersection of $\ZZ$ with some possibly unbounded real interval. A function $\gamma:I\To G$ is said to be a geodesic if $d(\gamma(m),\gamma(n))=|n-m|$ for all $n,m\in I$.

We say that $\gamma:I\To G$ is a path if $d(\gamma(j-1),\gamma(j))=1$ whenever $j-1$ and $j$ are in $I$.
\end{definition}

Obviously, this is equivalent to considering paths and geodesics in the Cayley graph of $G$. For example, if $\ZZ^2$ is equipped with the standard generating set, then the map $\gamma:\ZZ\To\ZZ^2$ given by $\gamma:n\mapsto (n,0)$ is a geodesic.

\begin{lemma}
\label{lemma:geodesicsegment}
If $g\in G$, there exists a geodesic $\gamma:\{0,\ldots,|g|\}\To G$ such that $\gamma(0)=1_G$ and $\gamma(|g|)=g$.
\end{lemma}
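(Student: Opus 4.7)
The plan is to construct the geodesic directly from a minimal-length word representing $g$. Since $|g|$ is defined as the length of the shortest word in $\genset$ representing $g$, pick $s_1,\ldots,s_{|g|}\in\genset$ with $g=s_1s_2\cdots s_{|g|}$, and define $\gamma:\{0,\ldots,|g|\}\To G$ by $\gamma(k):=s_1 s_2\cdots s_k$, with the convention that $\gamma(0)=1_G$ (the empty product). Then $\gamma(|g|)=g$ by construction.

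It remains to verify that $\gamma$ is a geodesic, i.e., that $d(\gamma(m),\gamma(n))=|n-m|$ for all $m,n\in\{0,\ldots,|g|\}$. Without loss of generality assume $m\le n$. Then $\gamma(m)^{-1}\gamma(n)=s_{m+1}\cdots s_n$, which exhibits this element as a product of $n-m$ generators, giving the easy inequality $d(\gamma(m),\gamma(n))\le n-m$.

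For the reverse inequality, the main (and really only) point is minimality: if $d(\gamma(m),\gamma(n))=k<n-m$, then $s_{m+1}\cdots s_n$ can be rewritten as a word $t_1\cdots t_k$ of length $k$ in $\genset$. Substituting this back yields
$$g = s_1\cdots s_m\cdot t_1\cdots t_k\cdot s_{n+1}\cdots s_{|g|},$$
a word of length $m+k+(|g|-n)=|g|-(n-m-k)<|g|$, contradicting the definition of $|g|$. Hence $d(\gamma(m),\gamma(n))=n-m=|n-m|$, as required.

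There is no real obstacle here; the lemma is the standard observation that prefixes of a minimal word form a geodesic path in the Cayley graph, and the only content is the substitution argument above which shows that no shortcut between two prefixes can exist without contradicting minimality of the original word.
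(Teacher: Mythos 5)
Your proof is correct and takes essentially the same approach as the paper: both construct $\gamma$ from the prefixes of a minimal-length word for $g$ and deduce that each subword $s_{m+1}\cdots s_n$ must itself be minimal (you spell out the substitution argument that the paper leaves implicit). No issues.
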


\begin{proof}
Let $s_1\ldots s_\ell$ be a minimal length word in $\genset^*$ representing $g$. By definition of the word metric, $\ell=|g|$. For $j=1,\ldots,\ell$, set $\gamma(j)$ to be the element of $g$ represented by the length $j$ prefix $s_1\ldots s_j$ of $s_1\ldots s_\ell$, and set $\gamma(0)=1_G$. Because $s_1\ldots s_\ell$ has minimal length among words representing $g$, $s_{j+1}\ldots s_k$ has minimal length among words representing $\gamma(j)^{-1}\gamma(k)$ for any $0\leq j<k\leq \ell$. In particular, $|\gamma(j)^{-1}\gamma(k)|=k-j$ for all such $j,k$---i.e., $\gamma$ is a geodesic.
\end{proof}

Recall our assumption that $G$ is infinite.

\begin{lemma}
\label{lemma:geodesicline}
There exists a geodesic $\gamma:\ZZ\To G$.
\end{lemma}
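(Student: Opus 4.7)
}
The plan is a standard compactness/diagonal argument: build arbitrarily long geodesic segments passing through $1_G$, and extract a pointwise limit using the fact that balls in $G$ are finite.

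First, for each $n\geq 1$, I would produce a geodesic $\gamma_n:\{-n,\ldots,n\}\To G$ with $\gamma_n(0)=1_G$. Because $G$ is infinite and $B(k,1_G)$ is finite for every $k$ (there are only finitely many words of length at most $k$ in the finite set $\genset$), there exists some $h_n\in G$ with $|h_n|\geq 2n$. Apply Lemma \ref{lemma:geodesicsegment} to get a geodesic $\alpha_n:\{0,\ldots,|h_n|\}\To G$ from $1_G$ to $h_n$, choose an index $k_n$ with $n\leq k_n\leq |h_n|-n$, and set
\[
\gamma_n(j):=\alpha_n(k_n)^{-1}\alpha_n(k_n+j)\qquad(j\in\{-n,\ldots,n\}).
\]
Since left multiplication preserves $d$, $\gamma_n$ is still a geodesic, and $\gamma_n(0)=1_G$.

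Next, I would extract a limit. For each fixed $j\in\ZZ$ and every $n\geq|j|$, we have $\gamma_n(j)\in B(|j|,1_G)$, which is a finite set. By a diagonal argument, pass to a subsequence $(n_k)$ such that for every $j\in\ZZ$ the sequence $\gamma_{n_k}(j)$ is eventually constant, and define $\gamma(j)$ to be that eventual value. Then for any $m,n\in\ZZ$, pick $k$ large enough that $n_k\geq\max(|m|,|n|)$ and $\gamma_{n_k}(m)=\gamma(m)$, $\gamma_{n_k}(n)=\gamma(n)$; because $\gamma_{n_k}$ is a geodesic we get $d(\gamma(m),\gamma(n))=|n-m|$, so $\gamma$ is the desired biinfinite geodesic.

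The only mildly subtle point is the local finiteness input---that balls in the Cayley graph are finite---which I expect to be the only nontrivial ingredient beyond Lemma \ref{lemma:geodesicsegment}; everything else is a routine compactness argument. No end-theoretic hypothesis is needed: infiniteness of $G$ alone supplies arbitrarily distant points, and finiteness of $\genset$ supplies the compactness needed for the diagonal extraction.
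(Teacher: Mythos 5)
Your proof is correct and follows essentially the same route as the paper: apply Lemma \ref{lemma:geodesicsegment} to get long geodesic segments, translate them so they are centered at $1_G$, and extract a pointwise limit via finiteness of balls (the paper cites ``a trivial case of Arzela--Ascoli,'' which is exactly your diagonal argument). The only cosmetic difference is that you take $|h_n|\geq 2n$ and pick a midpoint index $k_n$, whereas the paper takes $|g_r|=2r$ exactly and recenters at $\gamma_r(r)$; this changes nothing.
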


\begin{proof}
Because $G$ is infinite, the word norm on $G$ achieves every natural number. For each $r\in\NN$, choose some $g_r\in G$ such that $|g_r|=2r$. By Lemma \ref{lemma:geodesicsegment}, there is a geodesic $\gamma_r:\{0,\ldots,2r\}\To G$ such that $\gamma_r(0)=1_G$ and $\gamma_r(2r)=g_r$. Let $\tilga_r:\{-r,\ldots,r\}\To G$ be defined by $\tilga_r(n):=\gamma_r(r)^{-1}\gamma_r(n+r)$, so that $\tilga_r$ is a geodesic and $\tilga_r(0)=1_G$. By a trivial case of Arzela-Ascoli, the sequence $(\tilga_r)_{r\in\NN}$ subconverges pointwise to a geodesic $\gamma:\ZZ\To G$.
\end{proof}

Let $\ZZp=\{n\in\ZZ:n\geq 0\}$ and $\ZZn:=\{n\in\ZZ:n\leq 0\}$. Recall that $\nbhd_L(T)$ denotes the $L$-neighborhood of $T\subseteq G$. The following lemma bounds the intersection between the $L$-neighborhoods of the left and right halves of a geodesic $\gamma:\ZZ\To G$. It may be compared to \cite[Lemma 3.3]{cj}, which proves an analogous result for a path of the form $n\mapsto a^n$ (where $a$ has infinite order).

\begin{lemma}
\label{lemma:intersection}
If $\gamma:\ZZ\To G$ is a geodesic, then
$$\nbhd_L(\gamma(\ZZp))\cap\nbhd_L(\gamma(\ZZn))\subseteq B(3L,1_G).$$
\end{lemma}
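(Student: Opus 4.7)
The plan is a short triangle-inequality argument exploiting the defining property of a geodesic, namely that distances along $\gamma$ are realized by the integer difference of parameters. Take any $g$ in the asserted intersection; then by definition there exist $m\in\ZZp$ and $n\in\ZZn$ with $d(g,\gamma(m))\leq L$ and $d(g,\gamma(n))\leq L$.

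The first step is to control how far apart $m$ and $n$ can be. By the triangle inequality, $d(\gamma(m),\gamma(n))\leq 2L$, and because $\gamma$ is a geodesic this distance equals $|m-n|=m-n$ (using $m\geq 0\geq n$). Hence $0\leq m\leq 2L$ and $-2L\leq n\leq 0$. In particular both $\gamma(m)$ and $\gamma(n)$ lie within distance $2L$ of $\gamma(0)$.

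The second step is to deduce the conclusion by another triangle inequality: assuming (as in the construction of Lemma \ref{lemma:geodesicline}) that $\gamma(0)=1_G$, we get
\[
d(g,1_G)\;\leq\;d(g,\gamma(m))+d(\gamma(m),\gamma(0))\;\leq\;L+m\;\leq\;3L,
\]
so $g\in B(3L,1_G)$, as desired.

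There is no real obstacle; the only subtlety is remembering that the geodesic property upgrades the bound $d(\gamma(m),\gamma(n))\leq 2L$ into the arithmetic bound $m-n\leq 2L$, which is exactly what lets us trade the $L$-neighborhood condition for a bound on the parameters. Since the lemma's conclusion is stated with respect to $1_G$ rather than $\gamma(0)$, the argument implicitly uses the normalization $\gamma(0)=1_G$ (which can always be arranged by left-translating $\gamma$, as this does not affect whether $\gamma$ is a geodesic).
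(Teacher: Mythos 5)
Your proof is correct and follows essentially the same route as the paper: one triangle inequality to bound $|m-n|\leq 2L$ (upgraded via the geodesic property to a bound on the parameters), then another to bound $d(g,1_G)\leq 3L$. The only difference is that you make the normalization $\gamma(0)=1_G$ explicit, whereas the paper leaves it implicit (both Lemma \ref{lemma:geodesicline} and its use in Proposition \ref{proposition:agreement} do in fact arrange this).
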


(See Figure \ref{figure:intersection}).

\begin{proof}
Suppose that $g\in\nbhd_L(\gamma(\ZZp))\cap\nbhd_L(\gamma(\ZZn))$. Then there exists $n_+\geq 0$ and $n_-\leq 0$ such that $d(g,\gamma(n_+))\leq L$ and $d(g,\gamma(n_-))\leq L$. Because $\gamma$ is a geodesic,
$$|n_+-n_-|=d(\gamma(n_+),\gamma(n_-))
\leq d(\gamma(n_+),g) + d(g,\gamma(n_-))\leq 2L,
$$
so $n_+\leq n_-+|n_+-n_-|\leq 2L$. Thus
$$|g|=d(g,1_G)\leq d(g,\gamma(n_+))+d(\gamma(n_+),1_G)\leq L+2L=3L$$
\end{proof}

\begin{figure}[t]
\labellist
\small\hair 2pt

\pinlabel $\gamma(-2L)$ at 5 57
\pinlabel $\gamma(0)$ at 300 74
\pinlabel $\gamma(2L)$ at 600 89
\pinlabel $\leq L$ at 310 153
\pinlabel $\leq L$ at 460 159
\pinlabel $g$ at 380 206
\pinlabel $\gamma(n_-)$ at 243 74
\pinlabel $\gamma(n_+)$ at 522 91

\endlabellist

\centering
\centerline{\psfig{file=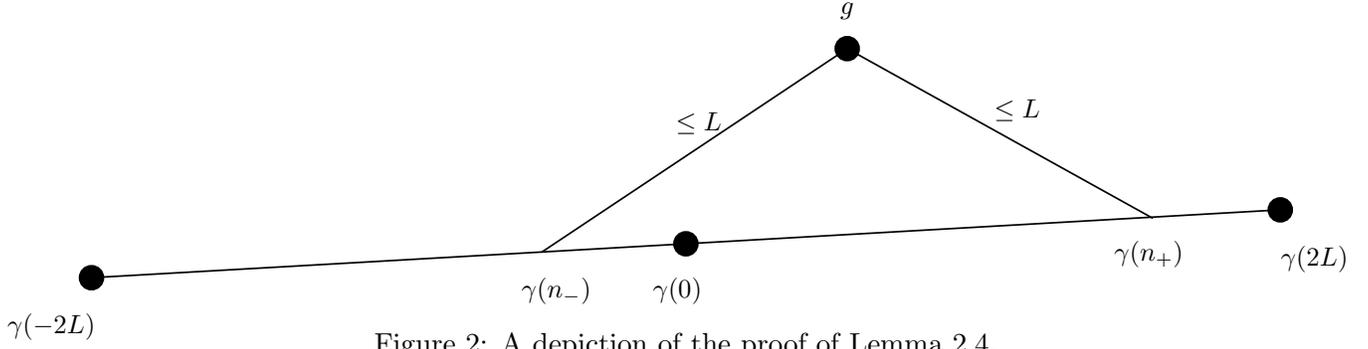,scale=80}}
\caption{A depiction of the proof of Lemma \ref{lemma:intersection}.}
\label{figure:intersection}
\end{figure}

\paragraph{Ends of groups.} Let $\Cay_\genset G$ denote the Cayley graph of $G$ with respect to $\genset$.
\begin{definition}
\label{definition:ends}
The number of ends of $G$ is the limit, as $n\To\infty$, of the number of unbounded connected components of $\Cay_\genset G\setminus B(n,1_G)$.
\end{definition}

\paragraph{Assumption: one endedness.} For the rest of the paper, we will assume that $G$ is one-ended. This implies that for any $r$, $\Cay_\genset G\setminus B(r,1_G)$ has exactly one unbounded connected component.
\begin{definition}
\label{definition:N}
For $r\in\NN$, let
$$N(r):=\sup\{|g|:g\text{ does not belong to the unbounded component of }\Cay_\genset G\setminus B(r,1_G)\}.$$
\end{definition}

Note that $N(r)<\infty$, as, by bounded degree of $\Cay_\genset G$, the supremum is taken over a finite set---the number of bounded components is finite since each is adjacent to $B(r,1_G)$ in $\Cay_\genset G$.

\section{Cocycles over the full shift.}
\label{section:cocycles}
Recall that, by standing assumption, $G$ is a one ended group. For the rest of the paper, fix a continuous cocycle $c:G\times A^G\to H$, where $H$ is a discrete group. 
 That $c$ is continuous means that, for every $g\in G$, there is some finite subset $F\subseteq G$ (depending on $g$) such that $c(g,x)$ depends only on the restriction $x|_{F}$ of $x$ to $F$. That is, if $x|_{F}=y|_{F}$ then $c(g,x)=c(g,y)$.

\begin{definition}
\label{definition:L}
Let $L\in\NN$ be such that, for any generator $s\in\genset$, the function $c(s,x)$ is determined by $x|_{B(L,1_G)}$.
\end{definition}

Recall that $0$ denotes some element of $A$.
\begin{definition}
\label{definition:zero}
Let $\zerow\in A^G$ be the all-$0$ configuration, i.e., $\zerow(g)=0$ for all $g\in G$. Let $\Deo\subseteq A^G$ consist of all $x\in A^G$ such that $x(g)=0$ for all but finitely many $g\in G$, and for $x\in\Deo$, let $\|x\|$ denote
$\sup\{|g|:x(g)\neq 0\}.$
\end{definition}

Recall our definition of ``path" from Definition \ref{definition:geodesic}.
\begin{lemma}
\label{lemma:neighborhood}
Let $\gamma:\{0,\ldots,n\}\To G$ be path, and take $x\in A^G$. Then $c(\gamma(n)^{-1},x)c(\gamma(0)^{-1},x)^{-1}$ is determined by $x|_{\nbhd_L(\gamma\{0,\ldots,n\})}.$ That is, if 
$$x|_{\nbhd_L(\gamma\{0,\ldots,n\})}=y|_{\nbhd_L(\gamma\{0,\ldots,n\})},$$
then
$$c(\gamma(n)^{-1},x)c(\gamma(0)^{-1},x)^{-1}=c(\gamma(n)^{-1},y)c(\gamma(0)^{-1},y)^{-1}.$$
\end{lemma}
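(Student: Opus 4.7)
The plan is to telescope along the path $\gamma$, reducing $c(\gamma(n)^{-1},x)c(\gamma(0)^{-1},x)^{-1}$ to a product of one-step cocycle values, each of which is controlled by the definition of $L$. Write $g_j := \gamma(j)$. Since $d(g_{j-1}, g_j) = 1$ for every $j \in \{1,\ldots,n\}$, there is a unique $s_j \in \genset$ with $g_j = g_{j-1} s_j$, and hence $g_j^{-1} = s_j^{-1} g_{j-1}^{-1}$. Because $\genset$ is symmetric, $s_j^{-1} \in \genset$ as well.

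Applying the cocycle identity $c(gh, x) = c(g, hx) c(h, x)$ to the factorization $g_j^{-1} = s_j^{-1} \cdot g_{j-1}^{-1}$, I get
\[
c(g_j^{-1}, x) \;=\; c(s_j^{-1}, g_{j-1}^{-1} x)\, c(g_{j-1}^{-1}, x),
\]
so that $c(g_j^{-1}, x) c(g_{j-1}^{-1}, x)^{-1} = c(s_j^{-1}, g_{j-1}^{-1} x)$. Iterating (i.e., telescoping) from $j=1$ to $j=n$ yields
\[
c(\gamma(n)^{-1}, x)\, c(\gamma(0)^{-1}, x)^{-1} \;=\; \prod_{j=n}^{1} c(s_j^{-1}, g_{j-1}^{-1} x),
\]
where the product is taken in the indicated order. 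This reduces the problem to showing that each factor on the right is determined by $x|_{\nbhd_L(\gamma\{0,\ldots,n\})}$.

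For the final step, I invoke Definition \ref{definition:L}: since $s_j^{-1} \in \genset$, the value $c(s_j^{-1}, g_{j-1}^{-1} x)$ depends only on the restriction of $g_{j-1}^{-1} x$ to $B(L, 1_G)$. Unwinding the definition of the left action $(gx)(h) = x(g^{-1}h)$, we have $(g_{j-1}^{-1} x)(h) = x(g_{j-1} h)$, so $(g_{j-1}^{-1} x)|_{B(L, 1_G)}$ is determined by $x|_{g_{j-1} B(L, 1_G)} = x|_{B(L, g_{j-1})}$ (using left-invariance of $d$). Since $B(L, g_{j-1}) \subseteq \nbhd_L(\gamma\{0,\ldots,n\})$ for every $j$, the entire product depends only on $x|_{\nbhd_L(\gamma\{0,\ldots,n\})}$, which is what we wanted.

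I do not anticipate a real obstacle: the argument is a direct telescoping application of the cocycle identity combined with the definition of $L$ and the translation-equivariance built into the Cayley graph metric. The only point where I would be careful is bookkeeping the order of the non-commutative product and the fact that $g_{j-1}^{-1}$ (not $g_{j-1}$) is what appears in the translated configuration, so that the relevant coordinates lie at $B(L, \gamma(j-1))$ rather than $B(L, \gamma(j-1)^{-1})$.
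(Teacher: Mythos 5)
Your proof is correct and follows essentially the same route as the paper: telescoping the cocycle identity along the path into one-step factors $c(s_j^{-1},\gamma(j-1)^{-1}x)$, each of which is controlled by Definition \ref{definition:L} and the observation that $(\gamma(j-1)^{-1}x)|_{B(L,1_G)}$ is determined by $x|_{B(L,\gamma(j-1))}\subseteq x|_{\nbhd_L(\gamma\{0,\ldots,n\})}$.
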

(See Figure \ref{figure:neighborhood}).
\begin{proof}
Let $s_j$ denote $\gamma(j-1)^{-1}\gamma(j)$ for $j=1,\ldots,n$. Since $\gamma$ is a path, we have that $s_j\in \genset$.
From the cocycle identity, we see that
$$c(\gamma(j)^{-1},x)=c(s_j^{-1},\gamma(j-1)^{-1}x)
c(\gamma(j-1)^{-1},x).$$
 Expanding recursively, we obtain
$$c(\gamma(n)^{-1},x)$$
$$=c(s_n^{-1},\gamma(n-1)^{-1}x)
c(s_{n-1}^{-1},\gamma(n-2)^{-1}x)$$
$$\ldots
c(s_1^{-1},\gamma(0)^{-1}x)
c(\gamma(0)^{-1},x).$$
Now, $c(s_j^{-1},\gamma(j-1)^{-1}x)$ is determined by $(\gamma(j-1)^{-1}x)|_\blone$, which in turn is determined by $x|_{B(L,\gamma(j-1))}$ because $(\gamma(j-1)^{-1}x)(g)=x(\gamma(j-1)g)$.
\end{proof}

\begin{figure}[t]
\labellist
\small\hair 2pt

\pinlabel $x$ at 3 170
\pinlabel $\gamma(j-1)^{-1}x$ at 234 222
\pinlabel $\gamma(j)^{-1}x$ at 228 94

\pinlabel $x$ at 300 170
\pinlabel $\gamma(0)^{-1}x$ at 544 312
\pinlabel $\gamma(n)^{-1}x$ at 544 10

\pinlabel $c(\gamma(j-1)^{-1},x)$ at 114 200
\pinlabel $c(\gamma(j)^{-1},x)$ at 114 100
\pinlabel $c(s_j^{-1},\gamma(j-1)^{-1}x)$ at 148 150

\pinlabel $c(\gamma(0)^{-1},x)$ at 400 250
\pinlabel $c(\gamma(1)^{-1},x)$ at 410 174
\pinlabel $c(\gamma(n-1)^{-1},x)$ at 410 140
\pinlabel $c(\gamma(n)^{-1},x)$ at 400 68

\pinlabel $c(s_1^{-1},\gamma(0)^{-1}x)$ at 572 270
\pinlabel $\gamma(1)^{-1}x$ at 554 236
\pinlabel $\ldots$ at 554 150
\pinlabel $\gamma(n-1)^{-1}x$ at 564 80
\pinlabel $c(s_n^{-1},\gamma(n-1)^{-1}x)$ at 584 48

\endlabellist

\centering
\centerline{\psfig{file=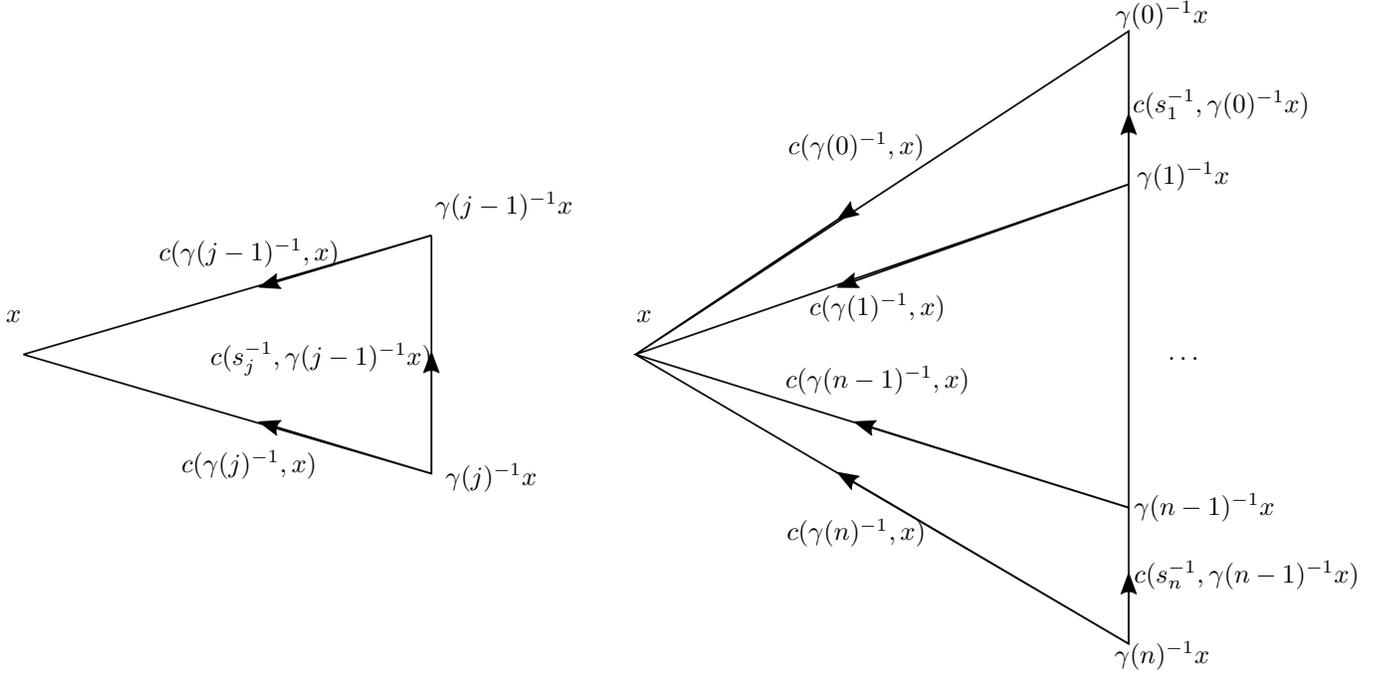,scale=80}}
\caption{A pair of van Kampen diagrams depicting the proof of Lemma \ref{lemma:neighborhood}.}
\label{figure:neighborhood}
\end{figure}

\section{Proof of the main theorem.}
\label{section:maintheorem}
Recall that, by standing assumption $G$ is a one ended group, and recall the notations $\zerow,\Deo$ and $\|x\|$ from Definition \ref{definition:zero}. We will now show that the continuous cocycle $c:G\times A^G\To H$ is cohomologous to a homomorphism---i.e., that there exists a homomorphism $\phi:G\To H$ and a continuous function $b:A^G\To H$ such that $c(g,x)=b(gx)\phi(g)b(x)^{-1}$. As a first step towards finding such $b$ and $\phi$, observe that if we had such $b$ and $\phi$, then we would have $c(g,\zerow)=b(\zerow)\phi(g)b(\zerow)^{-1}$ for every $g\in G$. For any $x\in\Deo$ and $|g|$ sufficiently large depending on $x$, we would have $b(gx)=b(\zerow)$ by continuity of $b$, and hence $c(g,x)=b(\zerow)\phi(g)b(x)^{-1}$. Equivalently, $b(x)$ would equal $b(\zerow)\phi(g)c(g,x)^{-1}$. Thus, the following definition of $\phi$ and $b|_\Deo$ corresponds to the normalization $b(\zerow)=1_H$. 

\begin{definition}
\label{definition:phi}
Let $\phi(g):=c(g,\zerow)$. For each $x\in\Deo$, fix some $g_x\in G$ such that $|g_x|>N(\|x\|+L)$ and set $b(x):=c(g_x,x)^{-1}\phi(g_x)$.
\end{definition}

Observe that $\phi$ is a homomorphism by the cocycle identity, since $\zerow$ is a fixed point for the action of $G$ on $A^G$. We will show that the restriction of $c$ to $G\times \Deo$ is cohomologous to $\phi$ with transfer function $b$, and then we will show that $b$ extends continuously to all of $A^G$. First, we see that the choice of $g_x$ in Definition \ref{definition:phi} is irrelevant.

\begin{lemma}
\label{lemma:independence}
If $|g|>N(\|x\|+L)$ then $c(g,x)^{-1}\phi(g)=b(x)$.
\end{lemma}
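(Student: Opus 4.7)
The plan is to compare $c(g,x)^{-1}\phi(g)$ with $b(x)=c(g_x,x)^{-1}\phi(g_x)$ by connecting $g^{-1}$ to $g_x^{-1}$ via a path in the Cayley graph that stays well away from the support of $x$, and then to apply Lemma \ref{lemma:neighborhood}. Rearranging, the desired equality is equivalent to
$$c(g_x,x)c(g,x)^{-1}=\phi(g_x)\phi(g)^{-1}=c(g_x,\zerow)c(g,\zerow)^{-1},$$
so it suffices to verify this identity.

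The geometric content is supplied by one-endedness together with Definition \ref{definition:N}. Since $\genset$ is symmetric, $|g^{-1}|=|g|>N(\|x\|+L)$ and likewise $|g_x^{-1}|=|g_x|>N(\|x\|+L)$. By the definition of $N$, any $h\in G$ with $|h|>N(\|x\|+L)$ must lie in the (unique) unbounded component of $\Cay_\genset G\setminus \BtL$'s analogue $\Cay_\genset G\setminus B(\|x\|+L,1_G)$, because it is neither in $B(\|x\|+L,1_G)$ (as $N(r)\geq r$) nor in any bounded component. Hence $g^{-1}$ and $g_x^{-1}$ lie in the same connected component of $\Cay_\genset G\setminus B(\|x\|+L,1_G)$, and I may fix a path $\gamma:\{0,\ldots,n\}\To G$ with $\gamma(0)=g^{-1}$, $\gamma(n)=g_x^{-1}$, and $|\gamma(j)|>\|x\|+L$ for every $j$.

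By the triangle inequality, every $h\in\nbhd_L(\gamma(\{0,\ldots,n\}))$ satisfies $|h|>\|x\|$, so $x(h)=0=\zerow(h)$, i.e.\ $x$ and $\zerow$ agree on $\nbhd_L(\gamma(\{0,\ldots,n\}))$. Lemma \ref{lemma:neighborhood} applied once with $x$ and once with $\zerow$ then yields
$$c(\gamma(n)^{-1},x)c(\gamma(0)^{-1},x)^{-1}=c(\gamma(n)^{-1},\zerow)c(\gamma(0)^{-1},\zerow)^{-1},$$
which, after substituting $\gamma(0)^{-1}=g$ and $\gamma(n)^{-1}=g_x$, is exactly the rearranged identity above.

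The only real obstacle is ensuring that such a path exists, and this is precisely what the threshold $N(\|x\|+L)$ provides: the bound forces both $g^{-1}$ and $g_x^{-1}$ into the unbounded component, and one-endedness then supplies a path connecting them that remains a safe $L$-buffer away from the support of $x$, on which $x$ is indistinguishable from $\zerow$.
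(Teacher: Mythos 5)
Your proposal is correct and takes essentially the same route as the paper: connect $g^{-1}$ to $g_x^{-1}$ by a path avoiding $B(\|x\|+L,1_G)$ (whose existence follows from one-endedness and the threshold $N(\|x\|+L)$), note that $x$ and $\zerow$ agree on the $L$-neighborhood of that path, and invoke Lemma \ref{lemma:neighborhood}. The only cosmetic difference is that you rearrange the target identity to $c(g_x,x)c(g,x)^{-1}=\phi(g_x)\phi(g)^{-1}$ rather than $\phi(g_x g^{-1})$, and you spell out the path-existence argument (via $N(r)\geq r$ and the uniqueness of the unbounded component) in a bit more detail than the paper does.
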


\begin{proof}
Let $\gamma:\{0,\ldots,n\}$ be a path such that $\gamma$ connects $g^{-1}$ to $g_x^{-1}$ outside of $B(\|x\|+L,1_G)$ (so that $\gamma(0)^{-1}=g$ and $\gamma(n)^{-1}=g_x$ and $|\gamma(j)|>\|x\|+L$ for $j=0,\ldots, n$). Such a path exists because $|g|$ and $|g_x|$ are greater than $N(\|x\|+L)$. Since $|\gamma(j)|>\|x\|+L$ for all $j\in \{0,\ldots,n\}$ and $x(g)=0$ whenever $|g|>\|x\|$, we have that $x(g)=0$ for any $g\in\nbhd_L(\gamma\{0,\ldots,n\})$, so by Lemma \ref{lemma:neighborhood}, we have

$$c(\gamma(n)^{-1},x)c(\gamma(0)^{-1},x)^{-1}
=c(\gamma(n)^{-1},\zerow)c(\gamma(0)^{-1},\zerow)^{-1}
.$$
Consequently
$$c(g_x,x)c(g,x)^{-1}=\phi(g_x g^{-1}),$$
or equivalently
$$c(g,x)^{-1}\phi(g)=c(g_x,x)^{-1}\phi(g_x)=b(x)$$
as desired.
\end{proof}

We now show that the restriction of $c$ to $G\times\Deo$ is cohomologous to $\phi$ with transfer function $b$.

\begin{proposition}
\label{proposition:coboundary}
For all $g\in G$ and $x\in\Deo$, we have $c(g,x)=b(gx)\phi(g)b(x)^{-1}$.
\end{proposition}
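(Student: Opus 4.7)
The plan is to use the freedom, granted by Lemma \ref{lemma:independence}, to represent $b(x)$ and $b(gx)$ using \emph{any} sufficiently large elements, not just the specific $g_x$ and $g_{gx}$ picked in Definition \ref{definition:phi}. The key observation is that if we represent $b(gx)$ using the element $g_x g^{-1}$ (for a well-chosen $g_x$), then the cocycle identity relates $c(g_x g^{-1}, gx)$ directly to $c(g_x, x)$ and $c(g,x)$, so the algebra closes up.

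Concretely, first note that $gx \in \Deo$ and $\|gx\| \leq \|x\| + |g|$. Using this, pick $g_x \in G$ with $|g_x|$ large enough that both
\[
|g_x| > N(\|x\| + L) \quad\text{and}\quad |g_x g^{-1}| > N(\|gx\| + L);
\]
for instance, taking $|g_x| > N(\|x\|+L) + N(\|gx\|+L) + |g|$ (with the reverse triangle inequality) suffices. By Lemma \ref{lemma:independence} the value $b(x) = c(g_x, x)^{-1}\phi(g_x)$ is unchanged by this choice, and the same lemma gives $b(gx) = c(g_x g^{-1}, gx)^{-1}\phi(g_x g^{-1})$.

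Next, apply the cocycle identity $c(g_x, x) = c((g_x g^{-1}) g, x) = c(g_x g^{-1}, gx)\, c(g, x)$ to obtain
\[
c(g_x g^{-1}, gx) = c(g_x, x)\, c(g,x)^{-1}.
\]
Substituting this and the expressions for $b(x)$ and $b(gx)$ (and using that $\phi$ is a homomorphism) into $b(gx)\phi(g)b(x)^{-1}$ gives
\[
b(gx)\phi(g)b(x)^{-1} = c(g,x)\, c(g_x,x)^{-1}\phi(g_x)\phi(g^{-1})\phi(g)\phi(g_x)^{-1}c(g_x,x) = c(g,x),
\]
as required.

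The only mild obstacle is ensuring that $g_x g^{-1}$ is far enough from the identity to invoke Lemma \ref{lemma:independence} at $gx$; this is handled simply by enlarging $|g_x|$, which is permitted because Definition \ref{definition:phi} imposes only a lower bound on $|g_x|$ and Lemma \ref{lemma:independence} guarantees independence from the choice. Everything else is a direct application of the cocycle identity and the homomorphism property of $\phi$.
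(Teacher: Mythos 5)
Your proof is correct and takes essentially the same approach as the paper: pick a single sufficiently large element (the paper calls it $\tilg$, you reuse the name $g_x$), express $b(x)$ and $b(gx)$ via Lemma \ref{lemma:independence} using that element and its translate by $g^{-1}$, and close the computation with the cocycle identity and the homomorphism property of $\phi$. The only cosmetic difference is notational: the paper introduces a fresh symbol $\tilg$ rather than overloading $g_x$, which avoids any appearance of redefining the element already fixed in Definition \ref{definition:phi}.
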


(See Figure \ref{figure:coboundary}).

\begin{proof}
Choose some $\tilg\in G$ such that $|\tilg|>N(\|x\|+L)$ and $|\tilg|>|g|+N(\|gx\|+L)$.
Then we have $|\tilg g^{-1}|\geq |\tilg|-|g|>N(\|gx\|+L)$, so by Lemma \ref{lemma:independence} we have $b(gx)=c(\tilg g^{-1},gx)^{-1}\phi(\tilg g^{-1})$ and $b(x)=c(\tilg,x)^{-1}\phi(\tilg)$.
It follows that $c(\tilg g^{-1},gx)^{-1}=b(gx)\phi(\tilg g^{-1})^{-1}$ and $c(\tilg,x)=\phi(\tilg)b(x)^{-1}$.

By the cocycle identity,
$$c(\tilg, x)=c(\tilg g^{-1},gx)c(g,x).$$
Rearranging, we obtain
$$c(g, x)=c(\tilg g^{-1},gx)^{-1}c(\tilg,x)
=(b(gx)\phi(\tilg g^{-1})^{-1})(\phi(\tilg)b(x)^{-1})
$$
$$=b(gx)\phi(g)b(x)^{-1}$$
as desired.
\end{proof}

\begin{figure}[t]
\labellist
\small\hair 2pt

\pinlabel $x$ at 0 156
\pinlabel $\tilg x$ at 290 45
\pinlabel $gx$ at 290 284

\pinlabel $c(g,x)$ at 170 324
\pinlabel $c(\tilg,x)$ at 170 38
\pinlabel $c(\tilg g^{-1},gx)$ at 354 156

\pinlabel $\phi(g)$ at 170 190
\pinlabel $\phi(\tilg)$ at 170 126
\pinlabel $\phi(\tilg g^{-1})$ at 190 156

\pinlabel $b(x)$ at 80 180
\pinlabel $b(gx)$ at 220 240
\pinlabel $1_G$ at 216 80

\pinlabel $A$ at 130 240
\pinlabel $B$ at 264 156
\pinlabel $C$ at 130 74

\endlabellist

\centering
\centerline{\psfig{file=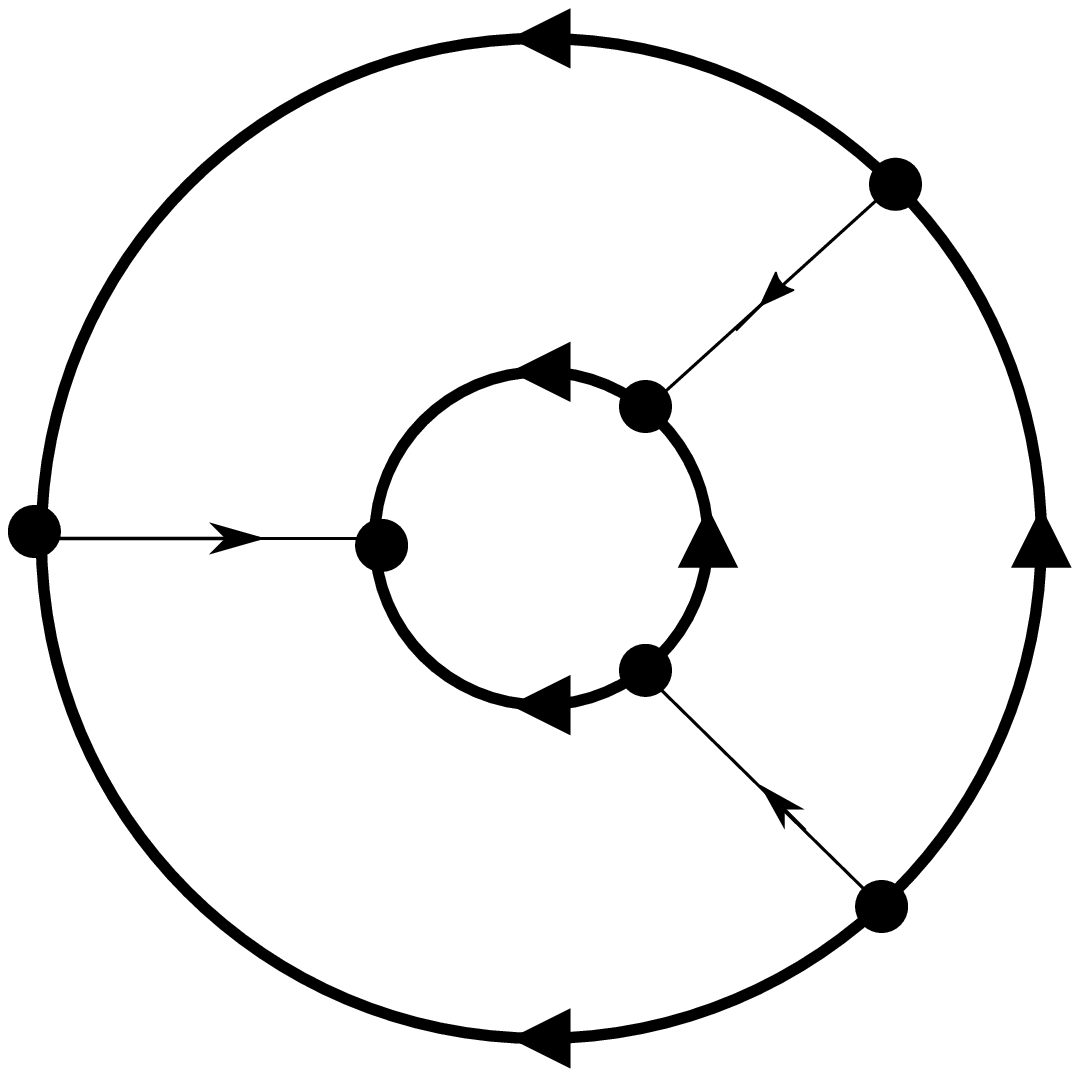,scale=80}}
\caption{A van Kampen diagram depicting the proof of Proposition \ref{proposition:coboundary}. The inner and outer circles are relations by the cocycle identity. B and C are relations by Lemma \ref{lemma:independence}. It follows that $A$ is a relation.}
\label{figure:coboundary}
\end{figure}

As $\Deo$ is noncompact, it is not even obvious a priori that $b$ takes on only finitely many values. However, we now show that $x\mapsto b(x)$ factors through the restriction $x\mapsto x|_\BtL$, so that $b$ may be extended continuously to a function $A^G\To H$. The following proposition should be compared to \cite[Lemmas 3.5,4.1]{cj}

\begin{proposition}
\label{proposition:agreement}
If $x,y\in\Deo$ with $x|_\BtL=y|_\BtL$ then $b(x)=b(y)$.
\end{proposition}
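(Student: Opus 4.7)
The plan is to interpolate between $x$ and $y$ through an intermediate configuration $z\in\Deo$ built from a biinfinite geodesic through $1_G$, so that $z$ agrees with $x$ on the $L$-neighborhood of the negative half and with $y$ on the $L$-neighborhood of the positive half. Concretely, I would apply Lemma \ref{lemma:geodesicline} to fix a biinfinite geodesic $\gamma:\ZZ\To G$ with $\gamma(0)=1_G$, and then define $z(g)=x(g)$ for $g\in\ngazzn$, $z(g)=y(g)$ for $g\in\ngazzp\setminus\ngazzn$, and $z(g)=0$ for all other $g$. By Lemma \ref{lemma:intersection}, the two neighborhoods intersect only inside $\BtL$, where $x$ and $y$ coincide by hypothesis, so in fact $z|_{\ngazzn}=x|_{\ngazzn}$ and $z|_{\ngazzp}=y|_{\ngazzp}$. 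Since $x,y\in\Deo$ have finite support, $z$ also belongs to $\Deo$, with $\|z\|\leq\max(\|x\|,\|y\|)$.

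The remainder of the argument is to establish $b(x)=b(z)$ and $b(z)=b(y)$ by two symmetric applications of Lemmas \ref{lemma:independence} and \ref{lemma:neighborhood}. For the first equality, I would fix $n\in\NN$ with $n>N(\max(\|x\|,\|y\|)+L)$, and observe that $\gamma(-n)^{-1}$ has word norm $n$ since $\gamma$ is a geodesic through $1_G$. Lemma \ref{lemma:independence} then yields $b(x)=c(\gamma(-n)^{-1},x)^{-1}\phi(\gamma(-n)^{-1})$ and $b(z)=c(\gamma(-n)^{-1},z)^{-1}\phi(\gamma(-n)^{-1})$. Applying Lemma \ref{lemma:neighborhood} to the reparametrized subpath $j\mapsto\gamma(-n+j)$ for $j=0,\ldots,n$, and using that $c(1_G,\cdot)=1_H$ by the cocycle identity, shows that $c(\gamma(-n)^{-1},\cdot)$ depends only on the configuration's restriction to $\nbhd_L(\gamma\{-n,\ldots,0\})\subseteq\ngazzn$. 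Since $x$ and $z$ agree on $\ngazzn$ by construction, this gives $c(\gamma(-n)^{-1},x)=c(\gamma(-n)^{-1},z)$ and hence $b(x)=b(z)$. The symmetric argument, using $\gamma(n)^{-1}$ and the subpath $\gamma|_{\{0,\ldots,n\}}$ (on whose $L$-neighborhood $z$ coincides with $y$), gives $b(z)=b(y)$.

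The main point requiring care is arranging the construction of $z$ so that its well-definedness is forced by the hypothesis: this is precisely where the scale $3L$ in Lemma \ref{lemma:intersection} matches the hypothesis $x|_{\BtL}=y|_{\BtL}$, and where it becomes essential to work with a geodesic rather than an arbitrary path. Once $z$ is in place, the remaining steps are essentially mechanical, with one-endedness of $G$ entering only through the finiteness of $N(\cdot)$ needed to invoke Lemma \ref{lemma:independence}.
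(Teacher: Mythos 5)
Your proof is correct and follows essentially the same route as the paper: build an interpolating configuration $z\in\Deo$ from a biinfinite geodesic through $1_G$ (well-defined by Lemma \ref{lemma:intersection} and the hypothesis $x|_\BtL=y|_\BtL$), then use Lemma \ref{lemma:neighborhood} along each half of the geodesic together with Lemma \ref{lemma:independence} to conclude $b(x)=b(z)=b(y)$. The only differences are cosmetic (you assign $x$ to the negative half and $y$ to the positive half, the paper does the reverse) or added care (you make the normalization $\gamma(0)=1_G$ and the lower bound on $n$ explicit, both of which the paper leaves implicit).
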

\begin{proof}
By Lemma \ref{lemma:geodesicline} there exists a geodesic $\gamma:\ZZ\To G$. We will begin by finding 
$z\in\Deo$ such that $z|_\ngazzp=x|_\ngazzp$ and $z|_\ngazzn=y|_\ngazzn$.

Let $z(g):=0$ for $g\notin \ngazzp\cup\ngazzn$, and take $z(g):=x(g)$ for $g\in\ngazzp$ and $z(g):=y(g)$ for $g\in\ngazzn$. For $g\in\ngazzn\cap\ngazzp\subseteq\BtL$ (recall Lemma \ref{lemma:intersection}), these definitions agree by our assumption that $x$ and $y$ agree on $\BtL$. Because $x,y\in\Deo$, we also have $z\in\Deo$.

By Lemma \ref{lemma:neighborhood}, $c(\gamma(n)^{-1},x)=c(\gamma(n)^{-1},z)$ for all $n\geq 0$. By Lemma \ref{lemma:independence}, this implies that $b(x)=b(z)$. Arguing similarly with $\gamma(-n)$ in place of $\gamma(n)$, we see that $b(y)=b(z)$. It follows that $b(x)=b(y)$.
\end{proof}

We will now prove our main theorem.

\begin{theorem}
\label{theorem:main}
Every continuous cocycle $c:G\times A^G\To H$ is cohomologous to a homomorphism.
\end{theorem}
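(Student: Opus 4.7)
My plan is to extend $b:\Deo\To H$ continuously to all of $A^G$ using Proposition \ref{proposition:agreement}, and then verify the cohomology identity $c(g,x)=b(gx)\phi(g)b(x)^{-1}$ on $A^G$ by reducing each fixed pair $(g,x)$ to a configuration in $\Deo$ where Proposition \ref{proposition:coboundary} already applies.

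First I would observe that Proposition \ref{proposition:agreement} says $b$ factors through the restriction map $x\mapsto x|_\BtL$. Since every pattern on $\BtL$ is realized by some element of $\Deo$ (e.g.\ by setting all coordinates outside $\BtL$ equal to $0$), I can define an extension $b:A^G\To H$ by $b(x):=b(\tilx)$ for any $\tilx\in\Deo$ with $\tilx|_\BtL=x|_\BtL$. The extension is well-defined by Proposition \ref{proposition:agreement}, agrees with the previous $b$ on $\Deo$, and is locally constant—hence continuous—since it depends only on the finite restriction $x|_\BtL$.

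To verify the cohomology identity for arbitrary $g\in G$ and $x\in A^G$, use continuity of $c$ to pick a finite $F\subseteq G$ such that $c(g,\cdot)$ is determined by the restriction to $F$. Let $E:=F\cup\BtL\cup B(3L,g^{-1})$ and let $\tilx\in\Deo$ agree with $x$ on $E$ and vanish elsewhere. Then three agreements hold: $c(g,x)=c(g,\tilx)$ because $x$ and $\tilx$ agree on $F$; $b(x)=b(\tilx)$ because they agree on $\BtL$; and $b(gx)=b(g\tilx)$ because $gx$ and $g\tilx$ agree on $\BtL$ (using $g^{-1}\BtL=B(3L,g^{-1})$, on which $x$ and $\tilx$ agree). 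Since $\Deo$ is shift-invariant, $g\tilx\in\Deo$, so Proposition \ref{proposition:coboundary} gives $c(g,\tilx)=b(g\tilx)\phi(g)b(\tilx)^{-1}$, and substituting the three agreements yields the desired identity.

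The main conceptual obstacle would be ensuring that the extended $b$ is compatible with both the $G$-action and the cocycle structure on arbitrary (not just finitely supported) configurations. However, Proposition \ref{proposition:agreement} collapses $b$ to finite data—essentially a function on the finite set $A^\BtL$—and since we can approximate any $x\in A^G$ by an element of $\Deo$ agreeing with it on any prescribed finite window, no further analytic work is needed beyond careful bookkeeping of which finite set must contain the window.
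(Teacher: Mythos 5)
Your proposal is correct and follows essentially the same strategy as the paper: extend $b$ via Proposition \ref{proposition:agreement}, approximate an arbitrary $x\in A^G$ by $\tilx\in\Deo$ agreeing with $x$ on a suitable finite window, apply Proposition \ref{proposition:coboundary} to $\tilx$, and transfer the identity back to $x$ via the three agreements. The only (harmless) variation is that you invoke continuity of $c(g,\cdot)$ directly to obtain the finite set $F$, whereas the paper instead applies Lemma \ref{lemma:neighborhood} along a geodesic from $1_G$ to $g^{-1}$ to show the explicit ball $B(|g|+3L,1_G)$ suffices; both routes yield the same three agreements.
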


\begin{proof}
Take $b$ and $\phi$ as given in Definition \ref{definition:phi}. We will first note that $b$ extends continuously to all of $A^G$, and then we will show that this extension is a transfer function for a cohomology from $c$ to $\phi$. 

\paragraph{Extending $b$ over $A^G$.}
For $x\in A^G$, let $\overline{x}\in\Deo$ be given by setting $\overline{x}(g):=x(g)$ for $g\in\BtL$ and $\overline{x}(g):=0$ for $g\notin \BtL$.  For arbitrary $x\in A^G$ we may now define $b(x):=b(\overline{x})$. By definition $b$ is continuous, and by Proposition \ref{proposition:agreement}, we have that $b:A^G\To H$ agrees with Definition \ref{definition:phi} on $\Deo$. We now proceed to show that $c(g,x)=b(gx)\phi(g)b(x)^{-1}$ for every $g\in G$ and $x\in A^G$.

\paragraph{Showing that $c$ is cohomologous to $\phi$ with transfer function $b$.} Given $g\in G$ and $x\in A^G$, we shall apply Proposition \ref{proposition:coboundary} to some $\tilx\in\Deo$ approximating $x$. Let $\tilx\in\Deo$ be given by setting $\tilx(g'):=x(g')$ for $g'\in B(|g|+3L,1_G)$ and $\tilx(g'):=0$ for $g'\notin B(|g|+3L,1_G)$. By Lemma \ref{lemma:geodesicsegment}, there is a geodesic $\gamma:\{0,\ldots,n\}\To G$ such that $\gamma(0)=1_G$ and $\gamma(n)=g^{-1}$. For such a geodesic, $\nbhd_L(\gamma\{0,\ldots,n\})\subseteq B(|g|+3L,1_G)$. It follows by Lemma \ref{lemma:neighborhood} that $c(g,x)c(1_G,x)^{-1}=c(g,\tilx)c(1_G,\tilx)^{-1},$ or in other words
 $$c(g,x)=c(g,\tilx).$$ 

We have $c(g,\tilx)=b(g\tilx)\phi(g)b(\tilx)^{-1}$ by Proposition \ref{proposition:coboundary}, as $\tilx\in\Deo$.

We have $b(\tilx)=b(\olx)=b(x)$ by definition. We see that
$$(g\tilx)|_\BtL=(gx)|_\BtL$$
because $g^{-1}\BtL\subseteq B(|g|+3L,1_G)$. Thus by Proposition \ref{proposition:agreement}, $b(g\tilx)=b(gx)$. It follows that
$$c(g,x)=c(g,\tilx)=b(g\tilx)\phi(g)b(\tilx)^{-1}=b(gx)\phi(g)b(x)^{-1},$$
as desired.
\end{proof}

\bibliographystyle{plain}
\bibliography{bibliography}

\end{document}